\title{\LARGE \bf
Approximate observability and back and forth observer\\
of a PDE model of crystallisation process
}
\author{Lucas Brivadis and Ludovic Sacchelli
\thanks{This research was partially funded by the French Grant ANR ODISSE (ANR-19-CE48-0004-01)}
\thanks{L. Brivadis and L. Sacchelli are with Univ. Lyon, Universit\'e Claude Bernard Lyon 1, CNRS, LAGEPP UMR 5007, 43 bd du 11 novembre 1918, F-69100 Villeurbanne, France
        {\tt\small lucas.brivadis@univ-lyon1.fr, ludovic.sacchelli@univ-lyon1.fr}}%
}
\newtheorem{theorem}{Theorem}[section]
\newtheorem{remark}[theorem]{Remark}
\newtheorem{definition}[theorem]{Definition}
\newcommand{\F}{\mathcal{F}}
\newcommand{\K}{\mathcal{K}}
\newcommand{\diff}{\mathrm{d}}
\newcommand{\regis}{\textsuperscript{\mbox{\scriptsize{\textregistered}}} }
\newcommand{\tmax}{T}
\newcommand{\dtc}{\psi}
\newcommand{\rmin}[1]{r^{\min}_{#1}}
\newcommand{\rmax}[1]{r^{\max}_{#1}}
\newcommand{\rinf}[1]{r^{\inf}_{#1}}
\newcommand{\rsup}[1]{r^{\sup}_{#1}}
\newcommand{\vit}{G}
\newcommand{\noy}{k}
\newcommand{\ee}{\eta}
\newcommand{\opc}{\mathcal{K}}
\newcommand{\lmax}{\ell^{\max}}
\newcommand{\R}{\mathbb{R}}
\newcommand{\etath}{\hat{\psi}}
\DeclareMathOperator{\sign}{sign}
\newcommand{\fonction}[5]{
\begin{align*}
\displaystyle
\begin{array}{lrcl}
#1: & #2 & \longrightarrow & #3 \\
    & #4 & \longmapsto & #5
\end{array}
\end{align*}}
\newcommand{\cv}{\to}
\newcommand{\cvl}[1]{\underset{#1}{\longrightarrow}}
\begin{document}

\maketitle
\thispagestyle{empty}
\pagestyle{empty}

\begin{abstract}

In this paper, we are interested in the estimation of Particle Size Distributions (PSDs) during
a batch crystallization process
in which particles of two different shapes coexist and evolve simultaneously.
The PSDs are estimated thanks to a measurement of an apparent Chord Length Distribution (CLD), a measure that we model for crystals of spheroidal shape.
Our main result is to prove the approximate observability of the infinite-dimensional system in any positive time.
Under this observability condition, we are able to apply a Back and Forth Nudging (BFN) algorithm to reconstruct the PSD.
\end{abstract}

\section{Introduction}

During a batch crystallization process, a critical issue is to monitor the Particle Size Distribution (PSD), which may affect the chemical-physical properties of the product.
In particular, multiple types of crystals may be evolving simultaneously in the reactor, with stable and meta-stable crystal formations. 
In that case, estimating the PSD associated to each shape is an important task, but difficult to realize in practice.
Modern Process Analytical Technologies (PATs) offer a wide variety of approaches to extract PSD information from measurements, such as image processing  based methods \cite{Benoit, GAO} for instance.
The use dynamical observers is a popular approach to the issue \cite{vissers2012model, porru2017monitoring, mesbah2011comparison, Uccheddu, lebaz, gruy:hal-01637703, BRIVADIS202017}.
A particular technique, on which we focus in this article, 
is to use PATs giving access to the Chord Length Distribution (CLD) (such as the Focused Beam Reflectance Measurement or the BlazeMetrics\regis technologies), and to reconstruct the PSD from the knowledge of the CLD \cite{worlitschek2005restoration, liu1998relationship, pandit2016chord, AGIMELEN2015629}.
When scanning across crystals, these sensors actually measure chords on the projection of the crystal on the plane that is orthogonal to the probe.
Hence, the measured CLD highly depends on the shapes that the crystals in the reactor may take.
And in many crystallization processes, several shapes can coexist due to polymorphism. Since the CLD sums up the contribution of each shape in one measurement, recovering the PSD associated to each shape only from the common CLD is a major challenge not yet tackled by the existing literature.

In the previous work \cite{brivadis:hal-03053999}, we proposed:
\textit{(i)} a model of the PSD-to-CLD relation for spheroid particles;
\textit{(ii)} a direct inversion method to instantly recover the PSD from the CLD when all particles have the same shape;
\textit{(iii)} a back and forth observer to reconstruct the PSD of several shapes from the 
knowledge of their common CLD on a finite time interval and an evolution model of the process.
We were able to prove the convergence of the algorithm only in one case: when crystals have only two possible shapes (spheres and elongated spheroids of fixed eccentricity), each having a positive growth rate independent of the size (McCabe hypothesis).
Among the differences with the previous result, let us highlight the two main improvements of this paper:
\textit{(a)}  we consider size-dependent growth rates;
\textit{(b)} we consider almost all possible combinations of two spheroid shapes with different eccentricities.
Then, we perform an observability analysis of the infinite-dimensional system, which is the main result of the paper.





\section{Evolution model and CLD}

A spheroid is a surface of revolution, obtained by rotating an ellipse along one of its axes of symmetry.
In particular, spheres are spheroids.
A spheroid is fully described by two scalar parameters: a radius $r$, characterizing its size and being the radius that is orthogonal to its rotation's axis, and an eccentricity $\eta$, characterizing its shape and being the ratio between the radius along its rotation's axis and $r$.

Consider a batch crystallisation process during which crystals of only two shapes appear: spheroids of eccentricities $\eta_1$ and $\eta_2$.
Let $\psi_1$ and $\psi_2$ be their corresponding PSDs. At any time $t\in[0, \tmax]$ during the process,
$\int_{r_1}^{r_2}\psi_i(t, r)\diff r$
is the number of crystals per unit of volume at time $t$
having the shape $\eta_i$ and a radius $r$ between $r_1$ and $r_2$.

Let $\rmin{i}$ be the minimal size at which crystals of shape $\eta_i$ appear
and $\rmax{i}$ be a maximal radius that no crystals can reach during the process.
At time $t=0$, assume that seed particles with PSD $\psi_{i, 0}$ for each shape $\eta_i$ lie in the reactor.
Denote by $G_i(t, r)$ the grow rate of crystals of shape $\eta_i$ and size $r$ at time $t$.
The usual (see, \emph{e.g.}, \cite{Mullin, Mersmann}) population balance equation leads to
\begin{equation}
    \frac{\partial \dtc_i}{\partial t}(t, r) + \vit_i(t, r) \frac{\partial \dtc_i}{\partial r}(t, r) = 0
\end{equation}
Assume that $G_i$ is $C^1$.
Note that the growth rate may be positive or negative.
The boundary conditions are given by
\begin{align}
    &\dtc_i(t, \rmin{i}) = u_i(t)
    \label{eq:rmin}
    \\
    &\dtc_i(t, \rmax{i}) = 0
    \label{eq:rmax}
\end{align}
where $u_i(t)$ denotes the appearance of particles of size $\rinf{i}$ and shape $\eta_i$ at time $t$.
Since $u_i$ is supposed to be unknown, it is part of the data to be estimated, with $\psi_{i, 0}$.
Note that the boundary conditions impose a relation between $u_i$ and $\psi_{i, 0}$ when $G_i(t, \rmin{i})<0$.
Set 
\begin{align}
&\begin{aligned}
\rinf{i} = \min\Big\{&\rmin{i},
\rmin{i} - \max_{\tau\in[0, \tmax]} \int_0^\tau G_i(t, \rmin{i})\diff t\Big\}
\end{aligned}
\\
&\begin{aligned}
\rsup{i} = \max\Big\{&\rmax{i},
\rmax{i} - \min_{\tau\in[0, \tmax]} \int_0^\tau  G_i(t, \rmax{i})\diff t\Big\}
\end{aligned}
\end{align}
In order to ensure the well-posedness of the evolution equation,
let us define $\psi_i(t, r)$ and $G_i(t, r)$ for $t\in[0, \tmax]$ and $r\in[\rinf{i}, \rsup{i}]\setminus[\rmin{i}, \rmax{i}]$ by
\begin{align}
&\left\{\begin{aligned}
&\dtc_i(t, r) = u_i(t+\tau),\\
&G_i(t, r) = G_i(t, \rmin{i}),
\end{aligned}\right.
&\forall r\in[\rinf{i}, \rmin{i}]
\label{eqtau}
\\
&\left\{\begin{aligned}
&\dtc_i(t, r) = 0,\\
&G_i(t, r) = G_i(t, \rmax{i}),
\end{aligned}\right.
&\forall r\in[\rmax{i}, \rsup{i}]
\end{align}
where $\tau$ in \eqref{eqtau} is such that
\begin{equation}
    \rmin{i} = r + \int_t^{t+\tau}G_i(s, \rmin{i})\diff s
\end{equation}
Roughly speaking, $\psi_i(t, r)$ for $r<\rmin{i}$ represents crystals that did not yet appear at time $t$, but will appear later at some time $t+\tau$.

Then the evolution of the crystallization process can be modeled as
\begin{equation}
\left\{\begin{aligned}
&\frac{\partial \dtc_i}{\partial t}(t, r) + \vit_i(t, r) \frac{\partial \dtc_i}{\partial r}(t, r) = 0\\
&\dtc_i(0, r) = \dtc_{i, 0}(r)
\end{aligned}
\right.
\label{syst}
\end{equation}
where $i\in\{1, 2\}$, $t\in[0, \tmax]$, $r\in[\rinf{i}, \rsup{i}]$
and
with the periodic boundary condition $\psi_i(t, \rinf{i}) = \psi_i(t, \rsup{i})$ since the boundary terms does not influence $\psi_i(t, r)$ for $r\in [\rmin{i}, \rmax{i}]$ and $t\leq \tmax$.
The new initial condition $\psi_{i, 0}(r)$ contains both the information on the seed particles (for $r\in[\rmin{i}, \rmax{i}]$ and on all the crystals that will appear during the process (for $r\in[\rinf{i}, \rmin{i}]$).

Note that, contrary to \cite{brivadis:hal-03053999},
we do not assume that $\rmin{1} = \rmin{2}$, nor that $G_i(t, r)$ is independent of $r$. We rather make the assumption that $G_i$ has separate variables, that is,
\begin{equation}
    G_i(t, r) = g_i f(t) h(r)
\end{equation}
for all $t\in[0, \tmax]$ and all $r\in[\rmin{i}, \rmax{i}]$,
where $g_i$ is a constant (either positive or negative, depending on whether crystals of shape $\eta_i$ are appearing or disappearing), and $f$ and $h$ do not depend on $i$.

\begin{remark}
In modelling the growth rate $G_i$, it can be linked with individual crystal volume growth.
For $r$ the radius of an individual crystal, the volume of an individual crystal is $V=\frac{4}{3}\pi \eta r^3$. As a consequence, $\frac{d V}{d t }=4\pi \eta r^2 G_i$. This leads to choices such as $h(r)=1/r^2$ for linear volume growth, or $h(r)=1$ for volume growth proportional to the crystal surface (which corresponds to McCabe hypothesis).

\end{remark}

Denote by $L^2(\rinf{i}, \rsup{i})$ the set of square integrable real-valued functions over $(\rinf{i}, \rsup{i})$, and $H^p(\rinf{i}, \rsup{i})$ the usual real-valued Sobolev spaces for $p\in\mathbb{N}$.

\begin{theorem}[Well-posedness]\label{th:wp}
Assume that $f\in C^0([0, \tmax]; \R)$ has a finite number of zeros
and $h$ is Lipschitz over $[\rmin{i}, \rmax{i}]$ and has constant sign.
Then for all $\psi_{i, 0}\in L^2(\rinf{i}, \rsup{i})$,
there exists a unique solution $\psi_i\in C^0([0, \tmax]; L^2(\rinf {i}, \rsup{i}))$ of the Cauchy problem \eqref{syst}.
\end{theorem}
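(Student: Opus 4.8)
The plan is to solve the Cauchy problem \eqref{syst} by the method of characteristics, exploiting the separated form $\vit_i(t,r)=g_if(t)h(r)$ to obtain an explicit flow, and then to promote the resulting solution operator from smooth data to all of $L^2$ by a density argument. First I would study the characteristic equation $\dot r=g_if(t)h(r)$. Because $h$ is Lipschitz and of constant sign on $[\rmin{i},\rmax{i}]$---hence, after the constant extension \eqref{eqtau}, bounded and bounded away from $0$ on $[\rinf{i},\rsup{i}]$---the primitive $H(r)=\int_{\rinf{i}}^{r}\frac{\diff s}{h(s)}$ is a bi-Lipschitz monotone homeomorphism onto its image, and $F(t)=\int_0^t f(s)\diff s$ is $C^1$ and bounded on $[0,\tmax]$ since $f\in C^0$. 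Separating variables yields the explicit two-parameter flow
\begin{equation*}
\Phi_{s,t}(r)=H^{-1}\big(H(r)+g_i(F(t)-F(s))\big).
\end{equation*}

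Next I would record the properties of $\Phi$ that drive everything else. The periodic condition is incorporated by regarding $[\rinf{i},\rsup{i}]$ as a circle; since $f$ has finitely many zeros, $g_if(t)$ keeps a definite sign on each subinterval between consecutive zeros, and as $h$ never vanishes the characteristics cross the identification seam transversally and without ambiguity, so $\Phi_{s,t}$ is a homeomorphism of the circle for every $s,t\in[0,\tmax]$. Differentiating the explicit formula gives $\partial_r\Phi_{s,t}(r)=h(\Phi_{s,t}(r))/h(r)$, which is bounded above and below by positive constants uniformly in $s,t$, i.e. $\Phi_{s,t}$ is bi-Lipschitz with a two-sided Jacobian bound.

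I would then define the candidate solution by transport along characteristics, $\dtc_i(t,r)=\dtc_{i,0}(\Phi_{t,0}(r))$. For $\dtc_{i,0}\in C^1$ compatible with periodicity this is a classical solution of \eqref{syst} by direct differentiation. For general $\dtc_{i,0}\in L^2$, the change of variables $\rho=\Phi_{t,0}(r)$ together with the two-sided Jacobian bound gives positive constants $c_1,c_2$ with
\begin{equation*}
c_1\,\|\dtc_{i,0}\|_{L^2}^2\le\|\dtc_i(t,\cdot)\|_{L^2}^2\le c_2\,\|\dtc_{i,0}\|_{L^2}^2
\end{equation*}
uniformly in $t\in[0,\tmax]$, so the solution operator $S(t):\dtc_{i,0}\mapsto\dtc_i(t,\cdot)$ extends to a uniformly bounded family on $L^2(\rinf{i},\rsup{i})$. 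Membership in $C^0([0,\tmax];L^2)$ then follows by a three-$\varepsilon$ argument: continuity of $t\mapsto\Phi_{t,0}$ and dominated convergence give it for $C^1$ data, and the uniform bound on $S(t)$ together with density of $C^1$ in $L^2$ transfers it to arbitrary data.

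For uniqueness I would work through the characteristic representation: since $\vit_i$ is Lipschitz in $r$ away from the single seam, and the flow crosses the seam unambiguously, every weak solution in $C^0([0,\tmax];L^2)$ is constant along the explicit characteristics and hence coincides with $S(t)\dtc_{i,0}$, while within the class of mild solutions uniqueness is immediate by construction. The main obstacle I anticipate is exactly the interaction between periodicity and the \emph{non-periodicity} of the coefficient at the seam, where $h$ jumps from $h(\rmax{i})$ to $h(\rmin{i})$. This is visible in the naive energy estimate, where multiplying \eqref{syst} by $\dtc_i$ and integrating leaves the boundary term $\tfrac12 g_if(t)\big(h(\rmax{i})-h(\rmin{i})\big)\dtc_i(t,\rsup{i})^2$, which does not cancel under the periodic condition; this is why uniqueness must be routed through the flow rather than an energy identity. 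Securing the two-sided Jacobian bound uniformly in $t$---including across the zeros of $f$, where the characteristic speed vanishes---is the technical heart on which both the $L^2$ estimate and the continuity in time rest, while checking that transport does not feed boundary data into the physical window $[\rmin{i},\rmax{i}]$ before time $\tmax$ (already anticipated in the definitions of $\rinf{i}$ and $\rsup{i}$) closes the argument.
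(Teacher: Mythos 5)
Your proposal is correct in substance, but it takes a genuinely different route from the paper. The paper's proof is short and delegative: it splits $[0,\tmax]$ at the finitely many zeros of $f$, reparametrizes time on each subinterval by $\tilde t=\int_{t_k}^t |f(s)|\,\diff s$ so that the equation becomes a transport equation with coefficient $(\sign f)\, g_i h(r)$ independent of time, invokes the well-posedness theory of linear hyperbolic systems with boundary conditions from \cite{bastin2016stability}, and concludes by induction over the subintervals. You instead construct the solution explicitly: separation of variables gives the flow $\Phi_{s,t}=H^{-1}\bigl(H(\cdot)+g_i(F(t)-F(s))\bigr)$, transport along it defines the solution operator, the two-sided Jacobian bound $\partial_r\Phi_{s,t}=h(\Phi_{s,t})/h$ gives uniform equivalence of $L^2$ norms, and density of smooth data plus uniform boundedness yields $C^0([0,\tmax];L^2)$ continuity. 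Your route buys self-containedness, an explicit formula (the same characteristics used later in the paper's numerical scheme), and it absorbs the time dependence into $F(t)=\int_0^t f$, so no induction over sign intervals of $f$ is needed; indeed your construction never truly uses the finite-zeros hypothesis, which in the paper's argument is tied to its decomposition. The paper's route buys brevity and rigor at the two places where your sketch is thinnest: (i) since $h$ is only Lipschitz, $\Phi_{s,t}$ need not be $C^1$, so transported $C^1$ data gives a Lipschitz, a.e.-differentiable solution rather than a classical one ``by direct differentiation'' --- harmless for your density argument, but the claim should be weakened; and (ii) uniqueness of weak solutions in $C^0([0,\tmax];L^2)$ cannot be concluded from ``every weak solution is constant along the explicit characteristics,'' since that statement has no pointwise meaning for $L^2$ data; it requires a duality argument against solutions of the adjoint transport equation (which your flow also furnishes) or a renormalization-type argument, whereas the reference cited in the paper packages this step. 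Your observation that the periodic identification makes $h$ jump from $h(\rmax{i})$ to $h(\rmin{i})$ at the seam, ruining the naive energy identity, is accurate and well taken, and routing uniqueness through the flow rather than an energy estimate is indeed the right fix.
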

\begin{proof}
Let $n$ be the number of zeros of $f$
and $([t_k, t_{k+1}])_{1\leq k\leq n}$ be intervals on which $f$ has constant sign, with $t_1 = 0$ and $t_n = \tmax$.
Over each interval $[t_k, t_{k+1}]$, introduce the time reparametrization $\tilde{t} = \int_{t_k}^t |f(s)|\diff s$.
Then $\psi$ is a solution of \eqref{syst} over $[t_k, t_{k+1}]$ if and only if $\tilde{\psi}(\tilde{t})= \psi(t)$ is a solution
\begin{equation}
\frac{\partial \tilde{\dtc}_i}{\partial \tilde{t}}(\tilde{t}, r) 
+ (\sign f) g_i h(r) \frac{\partial \tilde{\dtc}_i}{\partial r}(\tilde{t}, r) = 0
\end{equation}
According to \cite[Appendix 1]{bastin2016stability},
this linear hyperbolic system with periodic boundary conditions admits a unique solution in $C^0([t_k, \int_{t_k}^{t_{k+1}} |f(s)|\diff s]; L^2(\rinf{i}, \rsup{i}))$.
Reasoning by induction on each interval $[t_k, t_{k+1}]$, we find that there exists a unique $\psi_i\in C^0([0, \tmax]; L^2(\rinf{i}, \rsup{i}))$ solution of \eqref{syst}.
\end{proof}

\begin{remark}
If $\psi_{i, 0}$ and $f$ are more regular, then the corresponding solutions of \eqref{syst} are also more regular. In particular, if 
$\psi_{i, 0}\in H^2(\rinf{i}, \rsup{i})$ and
$f\in C^2([0, \tmax], \R)$,
then
$\psi_i\in C^0([0, \tmax]; H^2(\rinf {i}, \rsup{i})) \cap C^1([0, \tmax]; H^1(\rinf {i}, \rsup{i}))\cap C^2([0, \tmax]; L^2(\rinf {i}, \rsup{i}))$.
This remark will be used in Theorem~\ref{th:main}.
\end{remark}

Now, let us recall the model of the accessible measurement, the CLD, denoted by $q$, given in \cite{brivadis:hal-03053999}.
For any $t\in[0, \tmax]$, $\int_{\ell_1}^{\ell_2} q(t, \ell)\diff \ell$ is the number of chords measured by the sensor at time $t$ with length $\ell$ between $\ell_1$ and $\ell_2$.
The cumulative CLD is given by $Q(t, \ell) = \int_0^\ell q(t, l)\diff l$.
We model the PSD-to-CLD relation by
\begin{multline}\label{eqQ}
    Q(t, \ell)
    = \int_{\rmin{1}}^{\rmax{1}} k_1(\ell, r)\psi_1(t, r)\diff r\\
    + \int_{\rmin{2}}^{\rmax{2}} k_2(\ell, r)\psi_2(t, r)\diff r
\end{multline}
where $k_i$ is the  kernel for the PSD-to-CLD of each crystal shape $i$.
As developed \cite{brivadis:hal-03053999}, the apparent shape of a crystal with respect to the sensor is that of an ellipse (by projection onto a plane). In that way, for a given ellipse in the plane, the probability that the measured chord-length is less than  $\ell$ is  $1- \sqrt{1-\alpha\ell^2/(4r^2)}$, with coefficient $\alpha>0$ depending on orientation and eccentricity of the ellipse (with maximum possible chord length $2r/\sqrt{\alpha}$). The apparent ellipse is linked to the shape of the crystal and the crystal's random orientation in the suspension, which follows a uniform distribution on the sphere given in spherical coordinates by the probability measure $\frac{\sin \theta}{4\pi} \diff \phi\diff \theta$ for $(\phi,\theta)\in[0,2\pi]\times [0,\pi]$.
The kernel $k_i$
is obtained by {total expectation over possible orientations:}
\begin{equation}\label{eq:ker}
\noy_i(\ell,r)
=
1-
\int_{\phi=0}^{2\pi}\int_{\theta=0}^\pi
\sqrt{1-\dfrac{\ell^2}{4r^2}\alpha_{\ee_i}(\phi,\theta)}\frac{\sin \theta}{4\pi} \diff \theta\diff \phi,
\end{equation}
with
\begin{equation}
\alpha_{\ee_i}(\phi,\theta)=\frac{\cos ^2\phi}{\cos^2\theta+\ee_i^2\sin^2 \theta }+\sin ^2\phi
\end{equation}
and the convention that $\sqrt{x}=0$ for $x<0$.
Expression \eqref{eqQ} comes from the law of total expectation, while kernels $k_i$ can be determined by a probabilistic analysis of the two sources of hazards in the measure of a chord on a spheroid crystal: the random orientation of the spheroid with respect to the probe, and the random chord measured by the sensor on the projection of the spheroid onto the plane that is orthogonal to the probe’s laser beam.
Note that in the particular case of spherical crystals (\emph{i.e.} $\eta=1$), expression \eqref{eq:ker} is simpler since $\alpha_1(\phi, \theta)=1$.

For a given shape $\eta$, the length of the largest chord possibly measured by the sensor on a crystal of size $r$ is $\lmax = 2r\max\{1, \eta\}$, since the direction of the largest diameter of a spheroid depends on whether $\eta>1$ or not.
Set $\lmax = 2\max\{\rmax{1}\max\{1, \eta_1\}, \rmax{2}\max\{1, \eta_2\}\}$.
Let $X_i = L^2(\rinf{i}, \rsup{i})$ be the function spaces of PSDs and $Y = L^2(0, \lmax)$ the function space of CLD.
Define the operator $\mathcal{K}:X_1\times X_2\to Y$ that maps PSDs to their corresponding CLD:
\begin{equation}
    \opc(\psi_1, \psi_2) = \left(Q:\ell\mapsto
\sum_{i=1}^2 \int_{\rmin{i}}^{\rmax{i}} \noy_i(\ell, r)\psi_i(r)\diff r\right).
\end{equation}
The estimation problem that we aim to solve in this paper is the following:
``From the knowledge of $Q(t) = \opc(\psi_1(t), \psi_2(t))$ over $[0, \tmax]$, where $(\psi_1, \psi_2)$ is a solution of \eqref{syst},
estimate  $(\psi_{1, 0}, \psi_{2, 0})$.''

\section{Observability analysis}

First, we need to determine if the CLD $Q$ contains enough information to reconstruct the two PSDs $\psi_1$ and $\psi_2$.
In other words, we investigate the observability of the PDE \eqref{syst} with measured output $Q$.
Several observability notions exist on infinite-dimensional systems.

\begin{definition}[Observability]
Let $(\psi_1, \psi_2)$ be a solution of \eqref{syst} and $Q$ be the corresponding CLD.
Let $W(\tmax) = \int_0^{\tmax} \|Q(t)\|_Y^2 \diff t$. System \eqref{syst} is said to be
\begin{itemize}
    \item  exactly observable if, for some $\kappa>0$,
    $W(\tmax) \geq \kappa(\|\psi_{1, 0}\|_{X_1}^2+\|\psi_{2, 0}\|_{X_2}^2)$ for all $\psi_{i, 0}\in X_i$;
    \item approximately observable if
    $W(\tmax) >0 $ for all $(\psi_{1, 0}, \psi_{2, 0})\neq(0,0)$.
\end{itemize}
\end{definition}
These two notions are widely discussed in \cite{TW} for example.
Clearly, exact observability implies approximate observability, and they are equivalent on finite dimensional systems.
Unfortunately, the function $k_i(\ell, r)$ being bounded, the system is not exactly observable according to \cite[Proposition 6.3]{brivadis:hal-02529820}.
Therefore, we focus on approximate observability, as in \cite{Celle-etal.1989, xu1995observer, haine2014recovering}, and more recently in \cite{brivadis:hal-02529820}.
Let
$$
A(\eta)
=
\begin{cases}
1& \text{ if } \eta\geq 1,
\\
1/\eta^2& \text{ if } \eta< 1.
\end{cases}
$$
We prove approximate observability under the following geometric condition
\begin{equation}\label{eq:cond}
    {(\rmin{1})}^2 A(\eta_2)\neq {(\rmin{2})}^2 A(\eta_1).
\end{equation}

\begin{theorem}\label{th:main}
Assume \eqref{eq:cond} holds.
Assume $f\in C^2([0, \tmax]; \R)$ has a finite number of zeros
and $h(r)=1/r^m$ for some $m\in\mathbb{N}$.
Let $\psi_{i, 0}\in H^2(\rinf{i}, \rsup{i})$ and
denote by $\psi_i$ for $i=1,2$ the corresponding
solution of \eqref{syst}, satisfying the condition \eqref{eq:rmax}.
If
$\opc( \psi_1(t, \cdot),  \psi_2(t, \cdot))= 0$ for all $t\in[0, \tmax]$, then 
 $(\psi_1,\psi_2)=(0,0)$.
\end{theorem}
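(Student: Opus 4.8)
The plan is to combine the method of characteristics with the scale invariance of the measurement kernel, using the time evolution to compensate for the fact that the static operator $\opc$ need not be injective. I would first integrate \eqref{syst} explicitly: since $\vit_i(t,r)=g_if(t)h(r)$ has separated variables and $h(r)=1/r^m$, the characteristic through $(t,r)$ obeys $r^{m+1}=\rho^{m+1}+(m+1)g_iF(t)$ with $F(t)=\int_0^tf$, and $\psi_i$ is constant along it. The reparametrisation of Theorem~\ref{th:wp} absorbs the finitely many zeros of $f$, and the $H^2$-regularity ensures that the moments introduced below depend smoothly on $t$. As the hypothesis is linear in the data, it suffices to prove that the family $\{\opc(\psi_1(t),\psi_2(t))\}_{t\in[0,\tmax]}$ determines $(\psi_{1,0},\psi_{2,0})$ uniquely.

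The structural key is that the kernel \eqref{eq:ker} depends on $(\ell,r)$ only through $\ell/r$, so $\noy_i(\ell,r)=\kappa_i(\ell/r)$ for a compactly supported profile $\kappa_i$. This has two consequences I would use in tandem. On one hand, the Mellin transform in $\ell$ factorises the two shapes,
\[
\tilde\kappa_1(s)M_1(t,s)+\tilde\kappa_2(s)M_2(t,s)=0,\qquad M_i(t,s)=\int_{\rmin{i}}^{\rmax{i}}r^{s}\psi_i(t,r)\,\diff r,
\]
for $s$ in a strip and all $t$, where $\tilde\kappa_i$ carries the shape $\eta_i$ and $M_i$ the unknown PSD. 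On the other hand, since $A(\eta_i)=\max_{\phi,\theta}\alpha_{\eta_i}(\phi,\theta)$, the profile $\kappa_i$ is analytic on $(0,2/\sqrt{A(\eta_i)})$ and becomes singular at $2/\sqrt{A(\eta_i)}$, where the maximal-$\alpha$ orientations first get clipped by the convention $\sqrt{x}=0$ for $x<0$. Consequently the contribution of shape $i$ to $Q(t,\cdot)$ is analytic in $\ell$ below the edge $\ell=2\rmin{i}/\sqrt{A(\eta_i)}$ and singular at it. Condition \eqref{eq:cond} says exactly that these two edges differ, so the smallest singularity of $Q$ in $\ell$ belongs to a single shape.

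From here the two shapes can be peeled off. Say shape $1$ carries the smaller edge; since $Q(t,\cdot)\equiv0$ has no singularity there, the singular part of shape $1$'s contribution must vanish, which localises to $r$ near $\rmin{1}$ and forces $\psi_1(t,\cdot)$ to vanish near the lower end of its support. To turn this into $\psi_{1,0}\equiv0$ on all of $(\rinf{1},\rsup{1})$ I would bring in the dynamics: substituting the flow and expanding in powers of $F(t)$ yields, because $m$ is an integer and $F$ sweeps a nondegenerate interval (as $f\not\equiv0$ has finitely many zeros), a family of relations between the shifted moments $M_i(0,s-(m+1)j)$. These let one recover $M_1(0,\cdot)$ on a full vertical line, hence $\psi_{1,0}$ by Mellin inversion, and the positive-time flow is what carries the not-yet-appeared mass on $(\rinf{1},\rmin{1})$ into the measured window. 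With shape $1$ eliminated, the same argument applied to the remaining single-shape identity gives $\psi_{2,0}\equiv0$.

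The hardest part is the singularity analysis underpinning the separation. I must show that $\kappa_i$ is genuinely non-analytic at $2/\sqrt{A(\eta_i)}$ — computing the order of the singularity produced as the clipped set of orientations opens up — and that this singularity survives integration against $\psi_i$ so that its vanishing really pins down $\psi_i$ near $\rmin{i}$; this is precisely where \eqref{eq:cond} is indispensable, since coinciding edges would superimpose the two singular contributions. Two further technical points need care: the Mellin multiplier $\tilde\kappa_i$ may vanish at isolated $s$, so single-shape injectivity is not immediate and is instead obtained through the shifts by $m+1$, which reach any target moment $M_i(0,\sigma)$ via some $s=\sigma+(m+1)j$ with $\tilde\kappa_i(s)\neq0$; and the integration limits in $M_i(t,s)$ move with $t$ once the flow reaches below $\rmin{i}$, so the expansion in $F(t)$ and the recovery of the sub-$\rmin{i}$ part of the data must be tracked together.
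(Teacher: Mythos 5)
Your reading of the geometry is genuinely insightful: the kernel \eqref{eq:ker} does depend only on $\ell/r$, and condition \eqref{eq:cond} is exactly the statement that the two ``edges'' $2\rmin{i}/\sqrt{A(\eta_i)}$ differ — this is the same geometric content the paper extracts, in discrete form, from the limits $a_{n+1}(\eta)/a_n(\eta)\to A(\eta)$ and $\F_i^{2n+2}(\psi_i)/\F_i^{2n}(\psi_i)\to 1/(\rmin{i})^2$. However, the first load-bearing step of your plan is missing, and it is not a routine verification. A Pringsheim-type argument (the Taylor coefficients of the profile are positive) does give non-analyticity of $\kappa_i$ at $2/\sqrt{A(\eta_i)}$, but the \emph{strength} of that singularity is delicate: the maximizing set of $\alpha_{\eta}$ is a whole curve of orientations when $\eta>1$, and the orientation average can weaken the edge behaviour considerably. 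More seriously, after integrating in $r$ against a density that is only $H^2$ — not analytic — the inference ``singular part vanishes $\Rightarrow \psi_1(t,\cdot)=0$ near $\rmin{1}$'' requires an Abel-type inversion/injectivity statement you never formulate; order-by-order matching of the singular expansion yields at best flatness of $\psi_1$ at $\rmin{1}$ (all derivatives vanish), which for a non-analytic function does not give vanishing on a neighbourhood. Note the paper never needs the nature of the singularity, only its \emph{location}, obtained elementarily via Jensen's inequality; this is why its argument is robust where yours has to confront degenerate Laplace-type asymptotics. (A small slip in the same vein: $\kappa_i$ is not compactly supported — it tends to $1$, being a conditional CDF; it is $1-\kappa_i$ that has compact support, so your Mellin strip and its meromorphic continuation need to be set up with some care.)

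The second gap is that, even granting the local step, the global conclusion does not follow as outlined. When $g_1>0$ the characteristics of \eqref{syst} move \emph{away} from $\rmin{1}$, so interior mass of $\psi_{1,0}$ on $[\rmin{1}+\delta,\rmax{1}]$ never enters the edge window; edge-vanishing at all times constrains only the part of the data at or below $\rmin{1}$. The whole burden then falls on your shifted-moment relations, but these couple the two shapes at the \emph{same} argument: for each $\sigma$ you obtain a family of identities of the form $g_1^{j}\,\tilde\kappa_1(\sigma+(m+1)j)\,M_1(0,\sigma)=-\,g_2^{j}\,\tilde\kappa_2(\sigma+(m+1)j)\,M_2(0,\sigma)$, indexed by $j$, and concluding $M_1(0,\sigma)=M_2(0,\sigma)=0$ requires the two coefficient sequences to be non-proportional in $j$. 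That non-degeneracy is neither proved nor obviously tied to \eqref{eq:cond}: the large-$s$ asymptotics of $\tilde\kappa_i$ along real shifts are governed by $2\max\{1,\eta_i\}$ and the vanishing order of $1-\kappa_i$ there, quantities absent from \eqref{eq:cond}. Likewise, the moving-domain boundary terms you defer are not a technicality: in the paper's proof the analogous boundary terms $\psi_i(\rmin{i})$, $\partial_r\psi_i(\rmin{i})$ (produced by differentiating \eqref{E:injectivity} twice in time and integrating by parts) are precisely where \eqref{eq:cond} is invoked, after which the interior terms are eliminated by totality of the family $\{r^{-2n}\}$ together with the fact that the $a_n$-ratio identity can hold only finitely often. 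So your plan mirrors the paper's architecture (edge separation plus moment relations), but its two decisive steps — injectivity of the edge-singularity transform and non-degeneracy of the coupled moment system — are exactly the ones left unproven, and each is at least as hard as the theorem itself.
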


\begin{remark}
This statement generalizes the result of \cite{brivadis:hal-03053999},
which was limited to the case where $1 = \eta_1<\eta_2$, $\rmin{1}(0) = \rmin{2}(0)$, $\rmax{1}(0) = \rmax{2}(0)$, $g_1>0$, $g_2>0$ and $h(r) = 1$.
This case, not included in the statement of Theorem~\ref{th:main},
can be recovered thanks to technical comments found in Remark~\ref{rem:main}.
\end{remark}

We prove Theorem~\ref{th:main} in two steps. First the observability condition is translated into a sequential equality. Then we prove that this equality between sequences is actually asymptotically incompatible.

\smallskip
\noindent\textbf{Step 1}: From observability to sequence comparisons.

From \eqref{eq:ker}, we can derive 
(from the power series expansion of $\ell\mapsto\sqrt{1-\ell^2}$)
a power series expansion at $0$ (with infinite convergence radius) of $\K_i$,
$$
\K_i(\psi_i)(\ell)=\sum_{n=1}^\infty  a_n(\ee_i)b_n
\F_i^{2n}(\psi_i) \ell^{2n}
$$
with
$
a_n(\ee)=
\int_{\phi=0}^{2\pi}\int_{\theta=0}^\pi
\alpha_\ee^{n}(\phi,\theta) \frac{\sin \theta}{4\pi} \diff \theta\diff \phi$,
$b_n = \frac{1}{(n!)^2(1-2n)4^{2n}}$ and 
$$
\F_i^n(\psi_i)
=
\int_{\rmin{i}}^{\rmax{i}}
\frac{\psi_i(r)}{r^{n}}\diff r. 
$$
In proving the approximate observability, we may as well assume 
\begin{equation}\label{E:injectivity}
\K_1(\psi_1(t))=\K_2(\psi_2(t)),\qquad t\in[0,\tmax].
\end{equation}
This is reduced to power series expansion comparisons. Term-wise, we have
\begin{equation}\label{E:id}
  a_n(\eta_1) \F_1^{2n}(\psi_1)=a_n(\eta_2) \F_2^{2n}(\psi_2).
\end{equation}

If $f(t)\neq 0$ ($f$ is continuous and vanishes finitely many times), we differentiate \eqref{E:injectivity} with respect to time to obtain
$$
\frac{1}{f(t)}\partial_t
\left(
    \frac{1}{f(t)}\partial_t
    \left(
        \K_1(\psi_1)
    \right)
\right)
=
\frac{1}{f(t)}\partial_t
\left(
    \frac{1}{f(t)}\partial_t
    \left(
        \K_2(\psi_2)
    \right)
\right).
$$
Since
$
\frac{1}{f(t)}\partial_t
\left(
    \frac{1}{f(t)}\partial_t
        \psi_i
\right)=
g_i^2h(r)\partial_r(h(r)\partial_r\psi_i)
$
 (for $i=1,2$),
we obtain
\begin{equation}\label{E:id_diff}
  g_1^2 a_n(\eta_1) 
  \F_1^{2n}(h\partial_r(h\partial_r\psi_1))
  =
  g_2^2 a_n(\eta_2) 
  \F_2^{2n}(h\partial_r(h\partial_r\psi_2)),
\end{equation}
where, again,
$$
\F_i^{2n}(h\partial_r(h\partial_r\psi_i))
=
\int_{\rmin{i}}^{\rmax{i}}
\frac{h(r)\partial_r(h(r)\partial_r\psi_i(r))}{r^{2n}}\diff r.
$$
\textit{Notation:} To unburden the notations, we will denote $r_i=\rmin{i}$ for the remaining of the section.

With $h(r)=1/ r^m$ and by integration by parts,
\begin{multline*}
\begin{aligned}
\F_i^{2n}(h\partial_r(h\partial_r\psi_i))
=&-\frac{\partial_r\psi_i(r_i)}{r_i^{2n+2m}}
\\
&+
(2n+m)
\F_i^{2n+2m+1}(\partial_r\psi_i)
\\
=&-\frac{\partial_r\psi_i(r_i)}{r_i^{2n+2m}}
-(2n+m)\frac{\psi_i(r_i)}{r_i^{2n+2m+1}}
\end{aligned}
\\
+(2n+m)(2n+2m+1)
\F_i^{2n+2m+2}(\psi_i)
\end{multline*}
That is, changing the variable $n$ to $ n-m$,
\begin{multline}\label{E:IPP}
    \F^{2(n-m)}_i(h\partial_r(h\partial_r\psi_i))
    =
    -\frac{\partial_r\psi_i(r_i)}{r_i^{2n}}
    -(2n-m)\frac{\psi_i(r_i)}{r_i^{2n+1}}
    \\+
    (2n-m)(2n+1)
    \F^{2n+2}_i(\psi_i).
\end{multline}
Now we express \eqref{E:id_diff} in terms of \eqref{E:IPP}:
\begin{multline*}
    \frac{g_2^2}{g_1^2}\frac{a_{n-m}(\eta_2)}{a_{n-m}(\eta_1)}\bigg[-\frac{\partial_r\psi_2(r_2)}{r_2^{2n}}
    -(2n-m)\frac{\psi_2(r_2)}{r_2^{2n+1}}
    \\+
    (2n-m)(2n+1)
    \F^{2n+2}_1(\psi_2)\bigg]
    \qquad \\ \qquad
    =-\frac{\partial_r\psi_1(r_1)}{r_1^{2n}}
    -(2n-m)\frac{\psi_1(r_1)}{r_1^{2n+1}}
    \\+
    (2n-m)(2n+1)
    \F^{2n+2}_1(\psi_1).
\end{multline*}
Finally, we switch $\F^{2n+2}_2(\psi_2)$ for $\F^{2n+2}_1(\psi_1)$ using \eqref{E:id}, leading to the sequence equality
\begin{equation}\label{E:final_eq}
    U_n=V_n, \qquad \forall  n\geq m+1, n\in \mathbb{N},
\end{equation}
where we have set 
$$
U_n=\frac{g_2^2}{g_1^2}\frac{a_{n-m}(\eta_2)}{a_{n-m}(\eta_1)}\bigg(-\frac{\partial_r\psi_2(r_2)}{r_2^{2n}}
    -(2n-m)\frac{\psi_2(r_2)}{r_2^{2n+1}}\bigg)
$$
\begin{multline*}
    V_n=    -\frac{\partial_r\psi_1(r_1)}{r_1^{2n}}
    -(2n-m)\Bigg[\frac{\psi_1(r_1)}{r_1^{2n+1}} +
    (2n+1)\times
    \\
    \Big(1-\frac{g_2^2}{g_1^2}\frac{a_{n-m}(\eta_2)}{a_{n+1}(\eta_2)}\frac{a_{n+1}(\eta_1)}{a_{n-m}(\eta_1)}\Big)
    \F^{2n+2}_1(\psi_1)\Bigg].
\end{multline*}

\smallskip
\noindent
\textbf{Step 2:} Asymptotical identities.

To prove the approximate observability result, we prove that the asymptotics of both sides of \eqref{E:final_eq} are incompatible, imposing $(\psi_1,\psi_2)= (0,0)$.
To achieve the comparison, we need three identities. First, if $\psi_i(r_i)\neq 0$, then
\begin{equation}\label{E:id1}
    \F^{2n}_i(\psi_i)\sim \frac{\psi_i(r_i)}{2nr_i^{2n-1}}.
\end{equation}
This can be obtained by comparing $\psi_i(r)$ to $\psi_i(r_{i})$ on any small interval $(r_{i},r_{i}+\delta)$ ($\delta>0$).
By integration by parts, we also get that if $\psi_i(r_i)=0$ but $\partial_r\psi_i(r_i)\neq 0$, then
\begin{equation}\label{E:id2}
\F^{2n}_i(\psi_i)\sim \frac{\partial_r \psi_i(r_i)}{4n^2r_i^{2n-2}}.
\end{equation}
Finally,
we have
$
\lim_{n\to +\infty}\frac{a_{n+1}(\eta)}{a_n(\eta)}
=
A(\eta).
$
This can be obtained thanks to the following remark. The function $(\phi,\theta)\mapsto\frac{\sin\theta}{4\pi}$ is the density of a probability measure $\mu$ on $(\phi,\theta)\in [0,2\pi]\times [0,\pi]$. 
As such, $a_{n}(\ee)=\mathbb{E} \left(\alpha_\ee^{n}\right)$ where $\mathbb{E}$ denotes the expected value with respect to $\mu$.
In that respect, 
$$
a_{n+1}(\ee)=\mathbb{E} \left(\alpha_\ee^{n+1}\right)\leq \|\alpha_\ee\|_\infty\mathbb{E} \left(\alpha_\ee^{n}\right)=\|\alpha\|_\infty a_{n}(\ee).
$$
On the other hand,
Jensen's inequality for $\mu$ yields 
$$
a_{n+1}(\ee)
=
\mathbb{E} \left(\left(\alpha_\ee^{n}\right)^{\frac{n+1}{n}}\right)
\geq 
\left(\mathbb{E} \left(\alpha_\ee^{n}\right)\right)^{1+\frac{1}{n}}
=
\left(a_{n}(\ee)\right)^{1+\frac{1}{n}}.
$$
Hence
$\left(a_{n}(\ee)\right)^{\frac{1}{n}}
\leq 
\frac{a_{n+1}(\ee)}{a_{n}(\ee)}
\leq \|\alpha_\ee\|_\infty
$.
We obtain the result by noticing that both sides converge to $\|\alpha_\ee\|_\infty=A(\eta)$.

\smallskip

\noindent
\textbf{Step 3:} Proof of Theorem~\ref{th:main}.

\begin{proof}
First, let us analyse the influence of border terms.

Assuming either $\psi_2(r_2)\neq 0$ or  $\partial_r\psi_2(r_2)\neq 0$, the quotient $U_{n+1}/U_n$, yields 
\begin{multline*}
\frac{
    \partial_r\psi_2(r_2)r_2
    +(2n+2-m) \psi_2(r_2)
    }{
    \partial_r\psi_2(r_2)r_2^3
    +(2n-m) \psi_2(r_2)r_2^2
    }\times\\
\frac{a_{n-m+1}(\eta_2)}{a_{n-m}(\eta_2)}\frac{a_{n-m}(\eta_1)}{a_{n-m+1}(\eta_1)},
\end{multline*}
which has limit $\frac{A(\eta_2)}{r_2^2A(\eta_1)}$.

For the treatment of $V_n$, we use a natural generalization of the limit quotient of $a_n$:
$$
\frac{a_{n-m}(\eta_2)}{a_{n+1}(\eta_2)}\frac{a_{n+1}(\eta_1)}{a_{n-m}(\eta_1)}
\to
\frac{A(\eta_1)^{m+1}}{A(\eta_2)^{m+1}}.
$$

If $\psi_1(r_1)\neq 0$, then we deduce from \eqref{E:id1} that $\frac{V_n r_1^{2n}}{2n}$ has limit
$
-\frac{g_2^2}{g_1^2}\frac{A(\eta_1)^{m+1}}{A(\eta_2)^{m+1}}
\psi_1(r_1)$, which is incoherent with a limit quotient of $\frac{A(\eta_2)}{r_2^2 A(\eta_1)}\neq \frac{1}{r_1^2}$ by assumption \eqref{eq:cond}.

If $\psi_i(r_i)=0$ but $\partial_r\psi_i(r_i)\neq 0$, then
we deduce from \eqref{E:id1} that $\frac{V_n r_1^{2n}}{4n ^2}$ has limit
$
-\frac{g_2^2}{g_1^2}\frac{A(\eta_1)^{m+1}}{A(\eta_2)^{m+1}}
\partial_r\psi_1(r_1)$ which is again incoherent with a limit quotient of $\frac{A(\eta_2)}{r_2^2 A(\eta_1)}\neq \frac{1}{r_1^2}$.

Hence having $\psi_2(r_2)\neq 0$ or  $\partial_r\psi_2(r_2)\neq 0$, is incoherent with having $\psi_1(r_1)\neq 0$ or  $\partial_r\psi_1(r_1)\neq 0$. Now let's assume that $\psi_2(r_2)=\partial_r\psi_2(r_2)= 0$. Then if $\psi_1(r_1)\neq 0$, the $ \frac{V_nr_1^{2n}}{2n}$ has a non-zero limit despite being constantly zero, which is excluded. The same goes if $\partial \psi_1(r_1)\neq 0$ while $\psi_1(r_1)=0$. 

The conclusion of this first step is that if there exists a pair $(\psi_1,\psi_2)$ of $C^2$ functions satisfying \eqref{E:final_eq}, they must satisfy 
$$
\psi_1(r_1)=\partial_r\psi_1(r_1)=\psi_2(r_2)=\partial_r\psi_2(r_2)=0.
$$
We are now in a suitable position to conclude focusing on interior terms.
In that case, we are left with the equality
$$
\left(1-\frac{g_2^2}{g_1^2}\frac{a_{n-m}(\eta_2)}{a_{n+1}(\eta_2)}\frac{a_{n+1}(\eta_1)}{a_{n-m}(\eta_1)}\right)
    \F^{2n+2}_1(\psi_1)=0.
$$
Naturally, $\F^{2n+2}_1(\psi_1)$ must have  infinitely many non-zero terms, otherwise $\psi_1=0$ (the family $(1/r^{2n})_{n\geq n_0}$ is total on any bounded interval in $(a,b)$, $0<a<b$, for any $n_0$). But this would imply that 
$$
\frac{a_{n-m}(\eta_2)}{a_{n+1}(\eta_2)}\frac{a_{n+1}(\eta_1)}{a_{n-m}(\eta_1)}=\frac{g_1^2}{g_2^2}.
$$
infinitely often, which is not true except if $\eta_1=\eta_2$ and $g_1=g_2$.
\end{proof}

\begin{remark}\label{rem:main}
In the case $r_1=r_2$, $\eta_1=1$ and $\eta_2>1$,
Theorem~\ref{th:wp} does not allow to answer but the approximate observability result still holds due the following observation.
In that case, $a_n(\eta_2)\to 0$ but $1>a_n(\eta_2)\geq 1/\sqrt{n}.$ Hence, in \eqref{E:final_eq}, $V_n\times r_1^{2n}$ is equivalent to an integer power in $n$, while $U_n\times r_1^{2n}$ cannot, because of the dominating term containing $a_{n-m}(\eta_2)$.
\end{remark}

\section{Observer and numerical simulations}

The observability analysis of the previous section guarantees the convergence of the state estimation by a BFN algorithm.
Recall that the goal is to estimate $\psi_{i, 0}$ from the measurement of the CLD $Q$ over $[0, \tmax]$.
The BFN algorithm consists in applying iteratively of forward and backward Luenberger observers.
After each iteration of an observer over $[0, \tmax]$, the final estimation obtained at $\tmax$ is used as the initial condition of the next observer.
This strategy has been used in various contexts in recent decades \cite{auroux2005back, auroux2008nudging, auroux2012back}.
As shown in \cite{haine2014recovering} (which extended the results of \cite{ito2011time, Ramdani} which focused on exactly observable systems), the type of convergence depends on the observability properties of the system.
These results have been extended to the non-autonomous context (which is the case here since $G_i$ is time-varying) in \cite{brivadis:hal-02529820}
and applied to a crystallization process in \cite{brivadis:hal-03053999}.

In the context of this paper, the forward and backward observers are given by:
\begin{align}
&
\left\{\begin{aligned}
&\begin{aligned}
\frac{\partial \etath_i^{2n}}{\partial t}(t, r) = &-\vit_i(t, r) \frac{\partial \etath^{2n}_i}{\partial r}(t, r)\\
& - \mu\opc^*(\opc (\etath^{2n}_1(t), \etath^{2n}_2(t))- Q(t))
\end{aligned}
\\
&\etath^{2n}_i(0, r) =
\begin{cases}
\etath^{2n-1}_i(0, r) & \text{if } n\geq 1\\
\etath_{i, 0}(r) & \text{otherwise}
\end{cases}
\end{aligned}\right.
\label{obs2}\\
&
\left\{\begin{aligned}
&\begin{aligned}
\frac{\partial \etath^{2n+1}_i}{\partial t}(t, r) = &-\vit_i(t, r) \frac{\partial \etath_i^{2n+1}}{\partial r}(t, r)\\ 
& + \mu\opc^*(\opc (\etath^{2n+1}_1(t), \etath^{2n+1}_2(t))- Q(t))
\end{aligned}
\\
&\etath^{2n+1}_i(\tmax, r) = \etath^{2n}_i(\tmax, r)
\end{aligned}\right.
\label{obs2b}
\end{align}
where $\hat{\psi}_i^{n}(t, r)$ represents the estimation of $\psi_i(t, r)$ obtained by the algorithm after $n$ iterations,
$Q(t) = \opc(\psi_1(t), \psi_2(t))$ is the CLD at time $t$,
$\mu$ is a degree of freedom, called the observer gain,
and $\opc^*$ is the adjoint of the operator $\opc$:
\fonction{\opc^*}{Y}{X_1\times X_2}{Q}{\left(r\mapsto
\int_{0}^{\lmax} \noy_i(\ell, r)Q(\ell)\diff \ell\right)_{1\leq i\leq 2}.}
Note that \eqref{obs2} is the usual infinite-dimensional Luenberger observer of \eqref{syst}, while \eqref{obs2b} is a Luenberger observer of \eqref{syst} when reversed in time.
Then, 
combining the observability analysis provided in Theorem~\ref{th:main} and the convergence result \cite[Theorem 4.2]{brivadis:hal-03053999},
we obtain the following result.
\begin{theorem}\label{th:conv}
Under the assumptions of Theorem~\ref{th:main}, for all $\mu>0$, all $t\in[0, \tmax]$ and almost all $r\in[r_0, r_1]$,
\begin{equation}
    \etath^n(t, r)
    \cvl{n\cv+\infty}\psi(t, r).
\end{equation}
\end{theorem}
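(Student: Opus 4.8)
The plan is to place the system into the abstract framework for which the Back and Forth Nudging convergence theorem \cite[Theorem 4.2]{brivadis:hal-03053999} is established, and then simply to check that its hypotheses are met under the assumptions of Theorem~\ref{th:main}. Concretely, I would first recast \eqref{syst} as a non-autonomous abstract Cauchy problem $\dot{\dtc} = A(t)\dtc$ on the Hilbert space $X = X_1\times X_2$, where $A(t)$ is the transport operator $\dtc_i \mapsto -\vit_i(t,\cdot)\partial_r\dtc_i$ equipped with the periodic boundary conditions, and identify the measured output with $Q(t) = \opc\dtc(t)$. The two recursions \eqref{obs2}--\eqref{obs2b} are then exactly the forward and backward Luenberger observers associated with the pair $(A(t), \opc)$ and the gain $\mu\opc^*$, so that proving Theorem~\ref{th:conv} amounts to certifying that this pair satisfies the structural assumptions of the cited abstract result.

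Next I would verify those structural hypotheses. \emph{(i)} Well-posedness and the existence of a strongly continuous evolution family generated by $A(t)$ are provided by Theorem~\ref{th:wp}, whose regularity refinement (cf.\ the regularity Remark following Theorem~\ref{th:wp}) ensures that the $H^2$ initial data considered here produce solutions regular enough for the adjoint computations underlying the observers. \emph{(ii)} Time-reversibility: after the reparametrization $\tilde t = \int |f|$ used in the proof of Theorem~\ref{th:wp}, on each interval of constant sign of $f$ the operator $A(t)$ generates invertible composition (pullback) operators, since the characteristic flow of a smooth nonvanishing vector field on the circle is a diffeomorphism; piecing these together over the finitely many sign intervals yields an invertible evolution family, so the backward observer \eqref{obs2b} is well defined. \emph{(iii)} Admissibility of the output: since each kernel $\noy_i(\ell,r)$ is bounded, $\opc : X \to Y$ is a bounded operator, hence admissible, and its adjoint $\opc^*$ is the bounded operator displayed above.

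The only remaining, and decisive, hypothesis is approximate observability, namely $W(\tmax) = \int_0^{\tmax}\|Q(t)\|_Y^2\diff t > 0$ whenever $(\dtc_{1,0},\dtc_{2,0})\neq(0,0)$. This is precisely the content of Theorem~\ref{th:main}: since $Q$ is continuous in time, $W(\tmax)=0$ is equivalent to $\opc(\dtc_1(t),\dtc_2(t))=0$ for all $t\in[0,\tmax]$, which Theorem~\ref{th:main} forces to imply $(\dtc_1,\dtc_2)=(0,0)$; contrapositively, nonzero data give $W(\tmax)>0$, the approximate observability condition. With all hypotheses in place, I would invoke \cite[Theorem 4.2]{brivadis:hal-03053999} to conclude that for every gain $\mu>0$ the BFN iterates converge, in the norm of the state space $X$, to the true state $\dtc(t)$ at each $t\in[0,\tmax]$, and then read off the stated pointwise-in-$r$ convergence from this $L^2$ convergence on the spatial domain.

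The main obstacle I anticipate is not a computation but a careful matching of frameworks: one must check that the non-autonomous, only approximately observable setting here satisfies verbatim the admissibility and reversibility conditions required by the abstract theorem, and in particular that approximate (rather than exact) observability already suffices for the iterates to converge — at the cost, however, of losing any convergence rate, since exact observability genuinely fails here because the kernels are bounded. A second, more cosmetic point is the passage from convergence in the $X$-norm to the almost-everywhere pointwise convergence asserted in the statement, which I would treat as the natural spatial reading of the abstract strong-convergence conclusion.
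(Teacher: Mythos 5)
Your proposal is correct and takes essentially the same route as the paper: the paper's entire proof consists of observing that Theorem~\ref{th:main} furnishes the approximate observability hypothesis under which the abstract BFN convergence result \cite[Theorem 4.2]{brivadis:hal-03053999} applies to the forward/backward observers \eqref{obs2}--\eqref{obs2b}. Your additional verification of well-posedness, reversibility of the transport evolution family, and admissibility of the bounded output operator $\opc$ simply makes explicit the hypothesis-checking that the paper leaves implicit.
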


We propose a numerical simulation of this algorithm.
System~\eqref{syst} and observer \eqref{obs2}-\eqref{obs2b} being transport equations, they are solved by the method of characteristics.
The characteristic equation is given by
\begin{equation}\label{carac}
    \frac{\diff \rho_i}{\diff t} = G_i(t, \rho_i(t)).
\end{equation}
Along the solutions of this ODE,
$\psi_i$ and $\hat{\psi}_i^{n}$ satisfy
\begin{align}
    &\frac{\diff}{\diff t}\psi_i(t, \rho_i(t)) = 0,
    \label{ode1}\\
    &\frac{\diff}{\diff t}\hat{\psi}^{n}_i(t, \rho_i(t)) = (-1)^{2n+1}
    \mu\opc^*(\opc (\etath^{n}_1(t), \etath^{n}_2(t))- Q(t)).
    \label{ode2}
\end{align}

We choose a spatial discretization of $[\rinf{i}, \rsup{i}]$ with space-step $\diff x$, and integrate the characteristic equation \eqref{carac} over $[0, \tmax]$ with time-step $\diff t$ with initial conditions in this spatial discretization. Then, we integrate ODEs \eqref{ode1}-\eqref{ode2} along the characteristic curves with a first order Euler method.
Concerning the operator $\opc$, integrals are computed with the rectangles methods.
We consider the set of parameters given in Table~\ref{tab:param}, and the observer gain $\mu=0.001$ (small enough to preserve stability of the numerical scheme).
Note that in this example, $G$ does not depends on $t$, but this actually does not affect the convergence properties, since it is always possible to use a time-reparametrization as it is done in the proof of Theorem~\ref{th:wp}.
Moreover, condition \eqref{eq:cond} is satisfied by the example.
\begin{table}[ht!]
    \centering
    \begin{tabular}{|c|c|c|c|}
    \hline
        $\rmin{1}=\rmin{2} =  0.1$
        &$\rmax{1}=\rmax{2} =  0.2$
        &$\tmax = 1$
        \\
        \hline
        $g_1 = 0.1$
        &$g_2 = 0.2$
        &$f(t)h(r) = 1/r^2$
        \\
        \hline
        $\ee_1 = 0.5$
        &$\ee_2 = 2$
        &$\diff x=\diff t = 0.01$
        \\
        \hline
    \end{tabular}
    \caption{Parameters of the numerical simulation.}
    \label{tab:param}
\end{table}

The observer is initialized at $\hat\psi_{1, 0}=\hat\psi_{2, 0}=0$.
The initial conditions $\psi_{1, 0}$ and $\psi_{2, 0}$ are chosen as normal distributions centered at $r = 0.05$ and $r=0.15$, respectively.
Roughly speaking, crystals of shape $\eta_1$ will appear during $[0, \tmax]$, but are not in the reactor at $t=0$, while crystals of shape $\eta_2$ are in the reactor at $t=0$ but disappear through the process.
The result of the simulation is presented in Figure~\ref{fig:simu} (numerical implementation can be found in repository \cite{git}).
After only $10$ iterations,
the locus of the maximum of the two PSDs is already well estimated.
In practice, this is the main information to be estimated.
After $1000$ iterations, the estimations are much more accurate.
Still, a peak at $r=0.15$ remains on $\hat{\psi}_{1, 0}$ while it is not in $\psi_{1, 0}$. This peak is due to the important contribution of $\psi_{2, 0}$ in the CLD at $r=0.15$. However, its amplitude decreases as the number of iterations increases, and eventually vanishes according to Theorem~\ref{th:conv}.
\begin{figure}[ht!]
    \centering

    \includegraphics{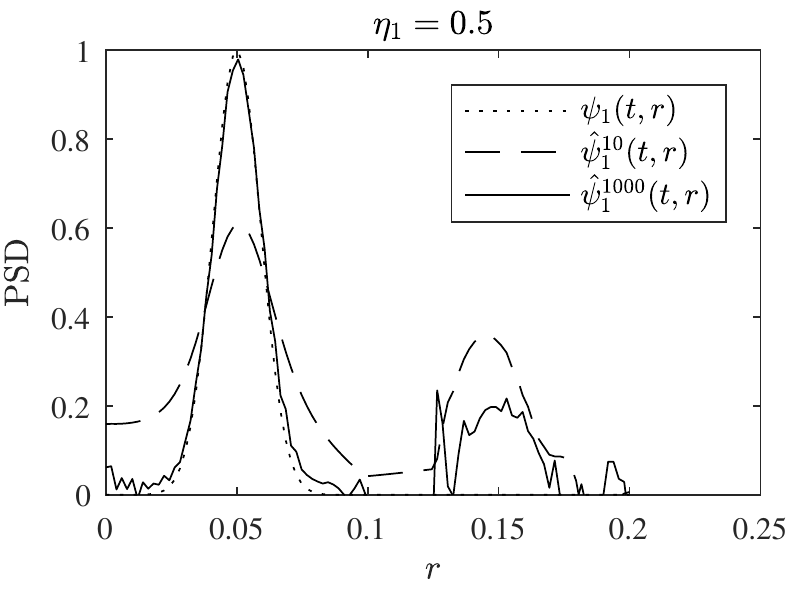}
    
    \includegraphics{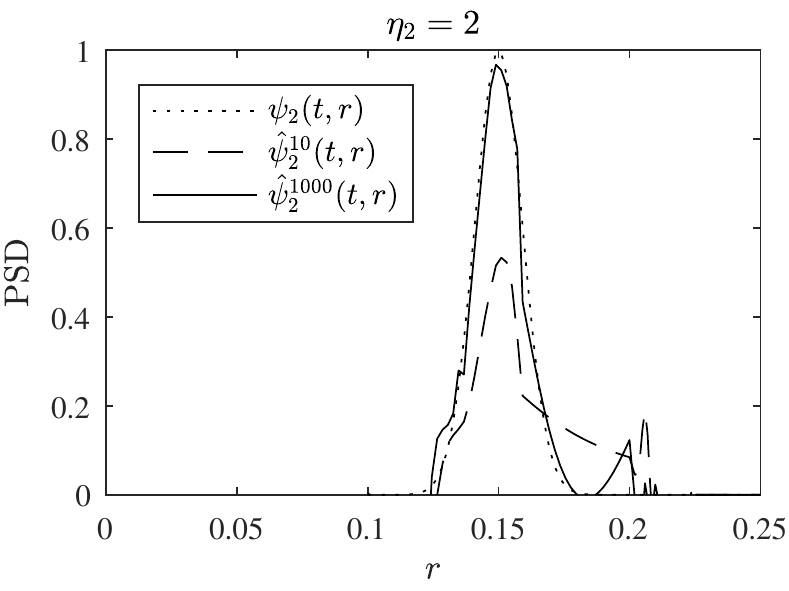}

    \caption{PSDs $\psi_1$ and $\psi_2$ at time $t=0$
    and their estimations $\hat{\psi}^{2n}_{1}(0)$ and $\hat{\psi}^{2n}_{2}(0)$ obtained after $20$ and $100$ iterations of the BFN algorithm.}
    \label{fig:simu}
\end{figure}

\section{Conclusion}

In this paper,
we propose an observability analysis of a crystallization process.
We prove, under a geometric condition,
that two PSDs
of spheroid crystals of different shapes
are fully determined by their common CLD along the process.
Hence, the BFN algorithm is able to reconstruct the PSDs from the measurement of the CLD over a finite time interval,
by using iterations of forward and backward infinite-dimensional Luenberger observers. We provide a numerical simulation of the algorithm which suggest that possible applications of this method to experimental data could benefit from this theoretical study.

\bibliographystyle{plain}
\bibliography{references}

\end{document}